\newcommand{\vep}{\varepsilon}
\newcommand{\holder}{\mathcal{C}}
\DeclareMathOperator{\Rm}{Rm}
\DeclareMathOperator{\Rc}{Ric}
\newcommand{\gtil}{\widetilde{g}}
\newcommand{\nabtil}{\widetilde{\nabla}}
\renewcommand{\Re}{\mathrm{Re} \;}
\newcommand{\del}{\partial}
\newtheorem{theorem}{Theorem}[section]
\newtheorem{prop}[theorem]{Proposition}
\newtheorem{remark}[theorem]{Remark}
\newtheorem{corollary}[theorem]{Corollary}
\numberwithin{equation}{section}
\begin{document}
\title[Convergence Stability]{Convergence stability for Ricci flow \\ on manifolds with bounded geometry}
\author[Bahuaud]{Eric Bahuaud}
\address{Department of Mathematics,
	Seattle University,
	Seattle, WA, 98122, USA}
\email{bahuaude [AT] seattleu [DOT] edu}
\author[Guenther]{Christine Guenther}
\address{Department of Mathematics,
	Pacific University,
	Forest Grove, OR 97116, USA}
\email{guenther@pacificu.edu}
\author[Isenberg]{James Isenberg}
\address{Department of Mathematics,
	University of Oregon,
	Eugene, OR 97403-1222 USA}
\email{isenberg@uoregon.edu}
\author[Mazzeo]{Rafe Mazzeo}
\address{Department of Mathematics, Stanford University, Stanford, CA}
\email{rmazzeo@stanford.edu}

\date{\today}
\subjclass[2010]{58J35; 35K}
\keywords{Bounded geometry; Stability of Ricci flow; asymptotically hyperbolic metrics}
\maketitle

\begin{abstract} We prove that the Ricci flow for complete metrics with bounded geometry depends continuously on initial conditions for finite time 
with no loss of regularity. This relies on our recent work where sectoriality for the generator of the Ricci-DeTurck flow is proved. We use this to 
prove that for initial metrics sufficiently close in H\"older norm to a rotationally symmetric asymptotically hyperbolic metric and satisfying a simple
curvature condition, but a priori distant from the hyperbolic metric, Ricci flow converges to the hyperbolic metric. 
\end{abstract} 

\section{Introduction}
This note focuses on two topics in the study of geometric flows.  We discuss only Ricci flow here but analogues of these results and 
observations apply more broadly.  The first topic is a general one concerning finite-time continuous dependence of solutions on initial 
conditions with no loss of regularity for chosen initial conditions. This is a technical point which is typically taken for granted, and slightly 
weaker versions appear explicitly elsewhere, cf.\ \cite{BGI} for one version in the compact setting.  The core issue is that while
the well-posedness on a particular function space of the Ricci flow with a particular choice of gauge (e.g., the DeTurck or Bianchi gauge) 
is more easily accessible, the `adjustment' needed to obtain a solution to the ungauged Ricci flow introduces some extra steps
for which one must check that time of existence and precise regularity are not lost. We state and prove a sharp 
and presumably optimal version of this well-posedness which holds in the setting of complete metrics with bounded geometry (see \cite[Section 2]{BGIM} for a review of this geometric setting). 

The second topic is a new stability result about Ricci flow of complete, asymptotically hyperbolic (AH) metrics.  The starting point is a 
result by the first author and Woolgar \cite{BW}, which states that if $g_0$ is AH and rotationally symmetric, and satisfies a mild geometric
hypothesis to be described below, then the Ricci flow solution $g(t)$ starting at $g_0$ converges to the standard hyperbolic metric. This is 
not perturbative since $g_0$ may be far away from the hyperbolic metric, and in particular might have some positive 
sectional curvatures. The new result here asserts the stability of the entire trajectory of such solutions, in the sense that if $g_1$ is any 
not necessarily rotationally symmetric AH metric sufficiently close to a rotationally symmetric metric $g_0$ satisfying the condition of \cite{BW}, 
then the Ricci flow solution $g_1(t)$ starting at $g_1$ also converges to the standard hyperbolic metric.   This is an instance of
convergence stability, as introduced in \cite{BGI}.  

This second result leads to a more general theorem about convergence stability around any given converging flow trajectory in other geometric
settings. This applies not only to Ricci flow but to more general geometric flows as well. We hope to develop this further in the future. 

The proofs of these results rely on the {\it sectoriality} of the linearized gauged Ricci flow operator acting on weighted (little) H\"older spaces.  To define sectoriality on a Banach space $X$, consider a closed unbounded linear operator $L$ with domain $D \subset X$. The \textit{resolvent set} is the set of $\lambda \in \mathbb{C}$ for which $(\lambda I - L): D \to X$ has bounded inverse, $(\lambda I - L)^{-1}$, which we call the \textit{resolvent operator}.  $L$ is sectorial on $X$ if there exists $\omega \in \mathbb{R}$ and $\theta \in (0,\frac{\pi}{2})$ such that the resolvent set contains a
sector of the form $S = \{ \lambda \in \mathbb{C} \setminus \{\omega\}: | \arg(\omega - \lambda)| < \theta \}$.  Such a sector contains the ``left'' half-plane $\Re (\lambda) < \omega$.  We further require that there is a constant $C>0$ so that for all $\lambda $ in this sector, the operator norm of the resolvent satisfies
\begin{equation*}
\| (\lambda I - L)^{-1} \| \leq \frac{C}{|\lambda - \omega|}.
\end{equation*}

As explained carefully in \cite{Lunardi}, this notion is a key technical ingredient
which makes it possible to apply analytic semigroup theory to obtain well-posedness for parabolic flows. The paper \cite{BGIM} proves 
sectoriality for a broad class of `geometric' elliptic operators on arbitrary complete manifolds of bounded geometry.  We prove that the Ricci 
flow for metrics with bounded geometry is well-posed, with finite time continuous dependence, in Section 2. The proof relies on well-posedness 
of the Ricci-DeTurck flow. In Section 3 we review the Ricci flow convergence result for asymptotically hyperbolic rotationally symmetric metrics 
from \cite{BW} and based on this, prove convergence stability for Ricci flow near any such trajectory. 

\

This version clarifies one argument in our published article appearing in the Proceedings of the American Mathematical Society.  There are small modifications to the statements of several Theorems.  We are grateful to Anuk Dayaprema for bringing this to our attention. 

\subsubsection*{Acknowledgments} This work was supported by collaboration grants from the Simons Foundation (\#426628, E. Bahuaud 
and \#283083, C. Guenther).  J. Isenberg was supported by NSF grant PHY-1707427. 

\section{Finite time well-posedness for Ricci flow}
Our first result is a sharp  quantitative version of well-posedness for the ungauged Ricci flow on H\"older spaces in bounded geometry.

\begin{theorem}[Well-posedness of Ricci Flow in H\"older spaces]
\label{continuousdependence}  \label{thm:ctsdep}
Let $(M,g_0)$ be a complete metric of bounded geometry with maximally defined Ricci flow $g_0(t)$ with $g_0(0) = g_0$ for $0 \leq t < T_{max}(g_0) \leq \infty$.  
For any $\tau \in (0, T_{max}(g_0))$, and $k \geq 2$, there exist $C_{\mathrm{init}}, C_{\mathrm{fin}} > 0$, depending only on $g_0$ and $\tau$, 
such that if 
\[
||g_1 - g_0||_{\holder^{k,\alpha}} \le C_{\mathrm{init}},
\]
then the unique bounded curvature Ricci flow solution $g_1(t)$ with $g_1(0) = g_1$ has maximal existence time 
$T_{max}(g_1) \ge \tau$, and 
\begin{equation*}
||g_1(t) - g_0(t) ||_{\holder^{k,\alpha}}  \le C_{\mathrm{fin}} || g_1 - g_0 ||_{\holder^{k,\alpha}} 
\end{equation*} 
for all $t \in [\tau/2,\tau].$
\end{theorem}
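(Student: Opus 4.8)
The plan is to reduce the ungauged flow to the strictly parabolic Ricci--DeTurck flow, run the abstract quasilinear theory of \cite{Lunardi} using the sectoriality established in \cite{BGIM}, and then transfer well-posedness and the Lipschitz estimate back through the DeTurck diffeomorphisms, taking care that neither the existence time nor the $\holder^{k,\alpha}$ regularity is lost. Concretely, I would fix a smooth background metric $\hatg$ of bounded geometry (for instance $\hatg = g_0$) and consider the Ricci--DeTurck flow
\[
\partial_t \gtil = -2\Rc(\gtil) - \mathcal{L}_{W(\gtil,\hatg)}\gtil,
\]
a strictly parabolic quasilinear system whose linearization at any metric of bounded geometry is sectorial on the little H\"older spaces, with constants controlled by the geometry, by \cite{BGIM}. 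Writing this as an abstract quasilinear equation $\partial_t u = A(u)u + F(u)$ and invoking \cite{Lunardi}, I obtain for each initial metric in a $\holder^{k,\alpha}$-neighborhood of $g_0$ a unique solution $\gtil_1(t)$, with existence time bounded below uniformly over the neighborhood and with Lipschitz dependence on the initial datum in $\holder^{k,\alpha}$ on compact time-subintervals away from $t=0$. One works in little H\"older spaces, the continuous interpolation spaces for which the abstract theory applies; this does not affect the final statement.

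Next I would promote the local existence to existence on all of $[0,\tau]$. Since $g_0(t)$ is a bounded-curvature flow on the compact interval $[0,\tau] \subset [0,T_{max}(g_0))$, Shi-type estimates give it uniformly bounded geometry there, so its DeTurck representative $\gtil_0(t)$ exists on $[0,\tau]$ and stays inside a fixed $\holder^{k,\alpha}$-tube of bounded-geometry metrics along which the linearizations are \emph{uniformly} sectorial. A continuation argument then applies: the set of $s \le \tau$ for which $\gtil_1$ exists on $[0,s]$ and remains within a prescribed small $\holder^{k,\alpha}$-distance of $\gtil_0(\cdot)$ is nonempty, relatively open, and closed, because the uniform lower bound on the local existence time lets one always take a further step while the Lipschitz estimate keeps $\gtil_1$ close to $\gtil_0$, provided $C_{\mathrm{init}}$ is small. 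Hence $\gtil_1$ exists on $[0,\tau]$ and $\|\gtil_1(t)-\gtil_0(t)\|_{\holder^{k,\alpha}} \le C\,\|g_1-g_0\|_{\holder^{k,\alpha}}$ on compact subintervals of $(0,\tau]$.

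The crux is the return to the ungauged flow. I recover $g_i(t) = (\phi^{(i)}_t)^*\gtil_i(t)$, where the diffeomorphisms $\phi^{(i)}_t$ are generated by $-W(\gtil_i(t),\hatg)$ with $\phi^{(i)}_0 = \mathrm{id}$; completeness and bounded geometry guarantee these exist on $[0,\tau]$ and are uniformly bi-Lipschitz. The delicate point flagged in the introduction is that $W$ involves one spatial derivative of $\gtil_i$, so \emph{a priori} $W \in \holder^{k-1,\alpha}$ and a naive ODE estimate would lose derivatives in the pullback. The resolution is that $\phi^{(i)}_t$ simultaneously solves the harmonic map heat flow, a parabolic equation whose principal coefficients are the inverse metric $\gtil_i^{-1} \in \holder^{k,\alpha}$ and whose first-order coefficients lie in $\holder^{k-1,\alpha}$; interior parabolic Schauder estimates therefore give $\phi^{(i)}_t \in \holder^{k+1,\alpha}$, uniformly for $t \in [\tau/2,\tau]$, so that $D\phi^{(i)}_t \in \holder^{k,\alpha}$ and $g_i(t) = (\phi^{(i)}_t)^*\gtil_i(t) \in \holder^{k,\alpha}$ with no loss. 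This smoothing of the gauge maps, which degenerates as $t \to 0$, is exactly why the estimate is asserted on the interval $[\tau/2,\tau]$ bounded away from the initial time. Uniqueness of bounded-curvature Ricci flow then identifies $g_1(t)$ with the asserted solution, so $T_{max}(g_1) \ge \tau$.

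Finally, the ungauged Lipschitz estimate follows by combining the bound on $\|\gtil_1(t)-\gtil_0(t)\|_{\holder^{k,\alpha}}$ with a bound on $\|\phi^{(1)}_t - \phi^{(0)}_t\|_{\holder^{k+1,\alpha}}$; the latter comes from the linear parabolic equation satisfied by the difference of the two harmonic map heat flows, whose inhomogeneity is controlled by the gauged Lipschitz estimate, again with constants uniform on $[\tau/2,\tau]$ thanks to the uniform bounded geometry of $g_0(t)$. I expect the main obstacle to be precisely this last transfer: arranging that the comparison of gauge maps neither loses a derivative nor forfeits the time of existence. Both difficulties are met by the two structural facts emphasized above, namely the parabolic smoothing of the DeTurck diffeomorphisms and the uniform sectoriality of the linearization along the entire reference trajectory.
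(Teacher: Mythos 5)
Your overall architecture (gauged flow $+$ sectoriality from \cite{BGIM} $+$ transfer back through the DeTurck diffeomorphisms, with parabolic smoothing used to avoid losing derivatives) matches the paper's, but there is a genuine gap at the step you dispose of with the word ``so'': you assert that because $g_0(t)$ has uniformly bounded geometry on $[0,\tau]$, ``its DeTurck representative $\gtil_0(t)$ exists on $[0,\tau]$'' with the single fixed reference metric $\hatg = g_0$. This is exactly the point the paper flags as unproven: the gauged representative is $(f_t^{-1})^*g_0(t)$ where $f_t$ solves the harmonic map heat flow \eqref{eqn:HMHF}, and bounded geometry of $g_0(t)$ does not prevent $f_t$ from ceasing to be a diffeomorphism (or degenerating) well before time $\tau$. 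Your continuation (open/closed) argument for $\gtil_1$ presupposes that $\gtil_0$ exists on all of $[0,\tau]$, so it cannot repair this. The paper's fix is structural: \cite[Proposition A.9]{BamlerKleiner} gives a uniform time $T_1>0$, depending only on the curvature bounds, during which the harmonic map heat flow started at the identity exists and remains a diffeomorphism; one then partitions $[0,\tau]$ into subintervals of length less than $T_1$, uses a \emph{different} reference metric $g_0(t_{i-1})$ on each subinterval $[t_{i-1},t_i]$, applies the contingent continuous-dependence result (Proposition \ref{prop:ctsdep-rfrdf}) on each piece, and chains the estimates, shrinking the initial radius finitely many times and taking $C_{\mathrm{fin}}=\prod_i C_i$. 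Without this subdivision-and-regauging step your argument does not get off the ground for any $\tau$ exceeding the (uncontrolled) lifespan of the single gauge.

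There is a second, subtler problem in your regularity-recovery step. You propose to upgrade the gauge maps to $\holder^{k+1,\alpha}$ and to estimate $\|\phi^{(1)}_t-\phi^{(0)}_t\|_{\holder^{k+1,\alpha}}$ from the linear parabolic equation satisfied by the difference of the two harmonic map heat flows. But the heat flow producing $\phi^{(1)}_t$ has source metric $g_1(t)$ --- the \emph{ungauged} flow --- so the inhomogeneity in that difference equation is controlled by $\|g_1(t)-g_0(t)\|$, which is precisely the quantity the theorem is trying to bound: the argument as stated is circular. If instead you define the $\phi^{(i)}_t$ purely through the ODE \eqref{eqn:ODEW} (which only involves gauged data), then the difference estimate costs a derivative of $W_1-W_0$, and you are back to needing $\gtil_1-\gtil_0$ in $\holder^{k+2,\alpha}$. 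That is the paper's actual mechanism: the difference $\mathfrak{g}_1-\mathfrak{g}_0$ of the two Ricci--DeTurck flows satisfies the \emph{linear} homogeneous parabolic equation \cite[(7.44)]{CLN} with $\holder^{k-1,\alpha}$ coefficients, so parabolic smoothing on $[\tau/2,\tau]$ (iterated twice) yields $\|\mathfrak{g}_1(t)-\mathfrak{g}_0(t)\|_{\holder^{k+2,\alpha}}\le C\|g_1-g_0\|_{\holder^{k,\alpha}}$; the two extra derivatives are then spent --- one on $W$, one on the pullback --- in the lossy transfer to the ungauged flow, landing exactly at $\holder^{k,\alpha}$. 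So the smoothing must be harvested on the gauged metric difference, not on the diffeomorphisms.
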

Note that the lower bound on time, $\tau/2$, appearing in the statement above may be replaced by any positive value less than $\tau$, which leads to the same H\"older estimate for a new constant $C_{\mathrm{fin}}$.
The method and proof here complete and extend \cite{BGI}, which contains a version this result, but only on compact manifolds.
This theorem  in a suitably modified form applies to other geometric flows, e.g., the obstruction flow as described in \cite{BGIM}. 

\medskip

The proof of Theorem \ref{thm:ctsdep} has two steps:
\begin{enumerate}
\item Proposition \ref{prop:ctsdep-rfrdf} (see below) shows that if the Ricci flow $g_0(t)$ exists on $[0,T']$, and there is an associated Ricci-DeTurck flow 
$\mathfrak{g}_0(t)$ on the same interval $[0,T']$ (for some choice of reference metric $\gtil$), then both of these solutions depend continuously (in $\mathcal C^{k,\alpha}$)
on the initial data in some small neighborhood of $g_0$. We also obtain a quantitative estimate of how far these solutions may drift from
one another up to time $T'$. 
\item Given a solution $g_0(t)$ to Ricci flow on $[0,T]$, we subdivide $[0,T]$ into finitely many subintervals $[t_{i-1},t_{i}]$
so that the harmonic map heat flow coupled to the Ricci flow yields a solution to \eqref{eqn:RDTF} on each of these intervals with reference metric 
$g(t_{i-1})$. Together with the first step, this proves the result.
\end{enumerate} 

\subsection{A brief review of Ricci-DeTurck flow}
This material is standard and explained carefully in many places, e.g., \cite{CLN}. 

Assume that $(M^n, g_0)$ is a complete manifold with bounded geometry.
As such, the curvature of $g_0$ is uniformly bounded. The Ricci flow is determined by the initial value problem 
\begin{equation} \label{eqn:RF}
\frac{\partial\, }{\partial t} g(t) = - 2 \Rc (g(t) ), \; \; g(0) = g_0.
\end{equation} 
As is well known, diffeomorphism invariance prevents this from being parabolic, but parabolicity is recovered by introducing a gauge. We use the DeTurck approach. 

Fix a reference metric $\gtil$ on $M$ which is also of bounded geometry and quasi-isometric to $g_0$ (i.e., for some positive constants $c_1$ and $c_2$, $c_1 \gtil \leq g_0 \leq c_2 \gtil$ and 
the $g_0$ covariant derivatives of $\gtil$ up to some order are bounded). Using any local coordinate system, define the 
components of the time-dependent vector field $W$ in terms of the Christoffel symbols of $\gtil$ and $g_0(t)$ by 
\begin{equation}
W^k(g(t)) = g(t)^{pq} \left( \Gamma(g(t))^{k}_{pq} - \Gamma(\gtil)_{pq}^k \right).
\end{equation}
Despite this coordinate-based definition, the vector field $W$ is well-defined. The Ricci-DeTurck flow with reference metric $\gtil$ is then defined by
\begin{equation} \label{eqn:RDTF}
\frac{\partial\, }{\partial t} \mathfrak{g}(t) = - 2 \Rc (\mathfrak{g}(t) ) + \mathcal L_{W(\mathfrak{g})} \mathfrak{g}(t), \; \; \mathfrak{g}(0) = \mathfrak{g}_0,
\end{equation} 
where $\mathcal L$ is the Lie derivative. This is a parabolic equation, and standard theory yields existence of a solution on a short time interval $[0, \vep_0)$. 

To pass from the solution of this gauged flow to the solution of \eqref{eqn:RF}, let $\phi_t$ be the family of diffeomorphisms generated by $-W$, i.e.,
\begin{equation} \label{eqn:ODEW}
\frac{d}{dt} \phi_t  = - W \circ \phi_t, \; \; \phi_0 = \mathrm{id}.
\end{equation} 
 $\phi_t$ also exists on some short time interval.
Then $g(t) = (\phi^{-1}) ^* \mathfrak{g}(t)$ solves \eqref{eqn:RF} on the time interval where solutions to both \eqref{eqn:RDTF} and \eqref{eqn:ODEW} exist.

This process -- proceeding from a solution of the Ricci-DeTurck flow to a solution of the Ricci flow -- can be reversed. To set this up as a parabolic equation, we couple Ricci flow to the harmonic map heat flow. 
Namely, suppose that $g(t)$ solves \eqref{eqn:RF} on $[0, T_{\max})$. Fix a reference metric $\gtil$ (again complete, bounded geometry and quasi-isometric to $g(0)$). 
Let $\Delta_{g(t),\gtil}$ denote the ($t$-dependent) map Laplacian associated to this pair of metrics (see \cite{AH} for a discussion of the map Laplacian in this setting). We seek a family of maps $f(t): (M, g(t)) \to (M, \gtil)$ 
solving the harmonic map heat flow 
\begin{equation} \label{eqn:HMHF}
\frac{\partial\, }{\partial t} f(t) = \Delta_{g_{0}(t),\gtil} \; f(t), \; \; f(0) = \mathrm{id}.
\end{equation} 
Then, so long as $f(t)$ remains a diffeomorphism, $\mathfrak{g}(t) =(f^{-1*}) g_0(t)$ satisfies \eqref{eqn:RDTF} with reference metric $\gtil$ 
and initial metric $g_0$.  

Solutions to \eqref{eqn:HMHF} exist and remain diffeomorphisms on some time interval $[0,T_1)$, but it is conceivable that $T_1 < T_{\max}$.
A result by Bamler and Kleiner \cite{BamlerKleiner} will show that this is not the case. 

\subsection{Well-posedness of Ricci-DeTurck flow}
\label{sec:ctsdep-rfrdtf}
We first prove continuous dependence of Ricci flow on initial conditions on any finite time interval, {\it assuming} existence of solutions of the 
associated Ricci-DeTurck flow on that same time interval. The point is simply to combine the well-posedness of the Ricci-DeTurck flow and
the flow determining the family of connecting diffeomorphisms. However, doing this naively leads to a drop in regularity, but an extra step
yields well-posedness with no such loss.  The `chaining' argument in the second step discussed above relies on this. 

\begin{prop}(Well-posedness of Ricci Flow contingent on Ricci-DeTurck flow).  \label{prop:ctsdep-rfrdf} Let $(M,g_0)$ be a complete Riemannian
manifold with bounded geometry. Suppose that $g_0(t)$ is a solution to Ricci flow on some interval $[0,T]$, $0 < T < T_{max}(g_0)$, and 
that relative to the background metric $\gtil = g_0$, the solution to the Ricci -DeTurck flow $\mathfrak{g}(t)$, $\mathfrak{g}(0) = g_0$ exists on the
same time interval. Then for any $k \geq 2$ and $\tau \in (0,T)$, there exist constants $C_{\mathrm{init}}, C_{\mathrm{fin}} > 0$, depending only on $k$, $g_0$ 
and $\tau$, such that if 
\[
||g_1 - g_0||_{\holder^{k,\alpha}} \le C_{\mathrm{init}},
\]
then the unique Ricci flow solution $g_1(t)$ with $g_1(0) = g_1$ exists for $0 \leq t \leq \tau$ and satisfies
\begin{equation}
||g_1(t) - g_0(t) ||_{\holder^{k-2,\alpha}}  \le C_{\mathrm{fin}} || g_1 - g_0 ||_{\holder^{k,\alpha}},\ \ 0 \leq t \leq \tau.
\label{stabest0}
\end{equation} 
Moreover, there exists a constant $C_{\mathrm{fin}}' > 0$ depending on $k$, $g_0$ and $\tau$, such that such that higher-derivatives satisfy
\begin{equation}
||g_1(t) - g_0(t) ||_{\holder^{k,\alpha}}  \le C_{\mathrm{fin}}' || g_1 - g_0 ||_{\holder^{k,\alpha}},\ \ \frac{\tau}{2} \leq t \leq \tau.
\label{stabest}
\end{equation} 
\end{prop}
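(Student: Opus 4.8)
The plan is to establish the two estimates in the order stated: first the $\holder^{k-2,\alpha}$ bound \eqref{stabest0} by directly composing the two constructions that connect the gauged and ungauged flows, and then to recover the two lost derivatives for positive time, giving \eqref{stabest}, by exploiting the parabolic smoothing of the gauged flow. The single genuinely analytic input is the well-posedness of \eqref{eqn:RDTF}: by the sectoriality established in \cite{BGIM} together with the analytic semigroup machinery of \cite{Lunardi}, the Ricci--DeTurck flow with fixed reference $\gtil = g_0$ is well posed on the little H\"older space $\holder^{k,\alpha}$, its solution depends Lipschitz-continuously on initial data with no loss of regularity, and --- since $\mathfrak{g}_0(t)$ is assumed to exist on $[0,T] \supset [0,\tau]$ --- nearby data $g_1$ produce a solution $\mathfrak{g}_1(t)$ on all of $[0,\tau]$ with
\[
\| \mathfrak{g}_1(t) - \mathfrak{g}_0(t) \|_{\holder^{k,\alpha}} \le C\,\| g_1 - g_0 \|_{\holder^{k,\alpha}}, \qquad 0 \le t \le \tau.
\]

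Next I would pass from $\mathfrak{g}_i(t)$ to the ungauged solutions $g_i(t)$ through the connecting diffeomorphisms of \eqref{eqn:ODEW}. The field $W(\mathfrak{g}_i(t))$ involves one derivative of the metric, hence is controlled in $\holder^{k-1,\alpha}$; a Gronwall/variational-equation estimate for the flow $\phi^{(i)}_t$ then shows that $\phi^{(1)}_t - \phi^{(0)}_t$ is $\holder^{k-1,\alpha}$-small and that $\phi^{(1)}_t$ persists as a diffeomorphism on $[0,\tau]$ once $g_1$ is close enough (using persistence of $\phi^{(0)}_t$ and the Bamler--Kleiner result \cite{BamlerKleiner} that the gauge maps survive as long as the Ricci flow does). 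Writing $g_i(t) = ((\phi^{(i)}_t)^{-1})^{*}\mathfrak{g}_i(t)$ and noting that the pullback costs a further derivative yields existence of $g_1(t)$ on $[0,\tau]$ and the $\holder^{k-2,\alpha}$ estimate \eqref{stabest0}. This two-derivative deficit is exactly the naive loss referred to in the text.

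The heart of the matter, and the step I expect to be the main obstacle, is recovering these two derivatives for $t \ge \tau/2$. The difficulty is structural: $\phi^{(i)}_t$ integrates $W(\mathfrak{g}_i(s))$ starting from $s=0$, where the data is only $\holder^{k,\alpha}$, so the pullback formula cannot by itself return a $\holder^{k,\alpha}$ tensor no matter how smooth $\mathfrak{g}_i(t)$ becomes later. To circumvent this I would re-gauge at an interior time $t_* = \tau/4$, where $g_i(t_*)$ is already smooth by interior parabolic regularity of bounded-curvature Ricci flow. Taking the smooth reference $\hat g = g_0(t_*)$ and solving the harmonic map heat flow \eqref{eqn:HMHF} from $t_*$ with $f_i(t_*) = \mathrm{id}$ produces maps $f_i(t)$ and re-gauged solutions $\hat{\mathfrak{g}}_i(t) = (f_i(t)^{-1})^{*} g_i(t)$ of \eqref{eqn:RDTF} on $[t_*,\tau]$ with no front diffeomorphism to obstruct regularity. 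Since these flows are parabolic, their difference solves a linear parabolic equation and smooths: starting from the $\holder^{k-2,\alpha}$ closeness of $\hat{\mathfrak{g}}_i(t_*) = g_i(t_*)$ supplied by \eqref{stabest0}, the smoothing estimate gains two derivatives and gives $\holder^{k,\alpha}$ closeness of $\hat{\mathfrak{g}}_1(t) - \hat{\mathfrak{g}}_0(t)$ for $t \ge \tau/2$, with a constant of order $(t-t_*)^{-1}$ that is bounded on that range. Because the data at $t_*$ is smooth, the maps $f_i(t)$ inherit the same parabolic smoothing and become close in a H\"older norm strong enough that $f_i(t)^{*}$ preserves $k$ derivatives, so transferring back through $g_i(t) = f_i(t)^{*}\hat{\mathfrak{g}}_i(t)$ now yields \eqref{stabest}. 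The remaining bookkeeping --- that the smoothing constants depend only on $k$, $g_0$ and $\tau$, that each re-gauging stays within the well-posedness neighborhood, and that the dependence of $f_i$ on $g_i$ is controlled with the correct derivative count --- is routine but is where the care lies.
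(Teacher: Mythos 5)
Your first two steps coincide with the paper's: sectoriality from \cite{BGIM} gives no-loss well-posedness of the Ricci--DeTurck flow with reference $\gtil = g_0$, and passing through the connecting diffeomorphisms costs two derivatives, giving \eqref{stabest0}. One correction already at this stage: going from the gauged to the ungauged flow only requires integrating the bounded vector fields $-W(\mathfrak{g}_i)$ via \eqref{eqn:ODEW}, whose flows exist and remain diffeomorphisms on all of $[0,\tau]$ by completeness; no appeal to Bamler--Kleiner is needed, and in any case \cite[Prop.\ A.9]{BamlerKleiner} does \emph{not} say the gauge maps ``survive as long as the Ricci flow does'' --- it gives a definite time $T_1$ determined by curvature bounds. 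This misreading becomes a genuine problem in your third step.

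For \eqref{stabest} your route differs from the paper's, and it has two gaps. The paper never leaves the original gauge: it applies parabolic smoothing to the difference equation \eqref{diffmetrics} for $\mathfrak{g}_1 - \mathfrak{g}_0$ (iterating, since the coefficients are a priori only $\holder^{k-1,\alpha}$-bounded) to upgrade the $\holder^{k,\alpha}$ DeTurck estimate to $\holder^{k+2,\alpha}$ on $[\tau/2,\tau]$, and only then transfers through the diffeomorphisms, spending exactly the two derivatives just gained. You instead transfer first, re-gauge at $t_*=\tau/4$ against the smooth metric $g_0(t_*)$, and smooth. The first gap is your transfer back. Writing $g_1(t)-g_0(t) = f_1^*\bigl(\hat{\mathfrak{g}}_1 - \hat{\mathfrak{g}}_0\bigr) + \bigl(f_1^* - f_0^*\bigr)\hat{\mathfrak{g}}_0$, the second term needs $\|f_1(t)-f_0(t)\|_{\holder^{k+1,\alpha}}$ small, i.e.\ the maps must be close to one more order than the metrics, and correspondingly you need the gauged difference small in $\holder^{k+2,\alpha}$, not $\holder^{k,\alpha}$ as you assert (the transfer is not derivative-neutral). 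Worse, your justification --- that the $f_i$ ``inherit the parabolic smoothing'' of the harmonic map heat flow --- is circular: the forcing term $\bigl(\Delta_{g_1(t),\hat g} - \Delta_{g_0(t),\hat g}\bigr)f_0$ is controlled by $\|g_1(t)-g_0(t)\|_{\holder^{m+1,\alpha}}$, which is known to be small only for $m+1\le k-2$; this forcing persists for all $t>t_*$ (it is not rough initial data that smooths away), so Schauder theory returns the map difference only in $\holder^{k-1,\alpha}$, and chasing the loop gains nothing: asking for the forcing in $\holder^{k-1,\alpha}$ is exactly the conclusion sought. The viable repair is the paper's mechanism: estimate the diffeomorphism difference by integrating the DeTurck vector fields $W(\hat{\mathfrak{g}}_i)$, whose difference \emph{is} controlled in every norm by your smoothing estimate on $\hat{\mathfrak{g}}_1-\hat{\mathfrak{g}}_0$, and identify the resulting flow with $f_i^{-1}$ by forward uniqueness of bounded-curvature Ricci flow. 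The second gap is existence of the re-gauged structure: Bamler--Kleiner guarantees the harmonic map heat flow exists and stays a diffeomorphism only for time $T_1$, so nothing ensures the $f_i$ exist on all of $[t_*,\tau]$. You would need either to chain re-gaugings (the machinery the paper reserves for the proof of Theorem \ref{thm:ctsdep}) or to slide $t_*$ with the target time so each re-gauged interval has length below $\min(T_1,\tau/4)$. Both gaps are repairable, but as written the central step does not close.
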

\begin{proof}
A version of this theorem appears in \cite{BGI}, but the statement here is slightly sharper and more general, inasmuch as we consider 
flows on complete manifolds of bounded geometry, and in addition, the regularity on the two sides of \eqref{stabest} is the same,
whereas in \cite{BGI} there is a drop of two orders of regularity. We refer to \cite{BGI} for most of the details and computations. 

Denote the Ricci-DeTurck operator by $F^{\gtil}$, so that \eqref{eqn:RDTF} can be written simply as 
\begin{equation}
\partial_t \mathfrak{g}(t) = F^{\gtil}( \mathfrak{g}(t) ), \, \mathfrak{g}(0) = g_0.
\end{equation}
Clearly, $F^{\gtil}: \holder^{k+2,\alpha}(M) \longrightarrow \holder^{k,\alpha}(M)$ is a smooth map for any $k \geq 0$, with 
elliptic linearization $DF^{\gtil}_g$.  We now invoke \cite[Theorem 1]{BGIM}, which asserts that $DF^{\gtil}_g$ is sectorial 
on $\holder^{k,\alpha}(M)$.  Using this as in \cite[Theorem 6]{BGI}, we obtain constants $C^{\mathrm{DT}}_{\mathrm{init}}, C^{\mathrm{DT}}_{\mathrm{fin}} > 0$ 
so that if 
\[ 
\| g_1 - g_0 \|_{\holder^{k,\alpha}} < C^{\mathrm{DT}}_{\mathrm{init}}
\] 
then the solution $\mathfrak{g}_1(t)$ to the Ricci-DeTurck flow with $\gtil = g_0$ exists at least for $0 \leq t \leq \tau$ and satisfies
\begin{equation} \label{eqn:DTest}
\| \mathfrak{g}_1(t) - \mathfrak{g}_0(t) \|_{\holder^{k,\alpha}} < C^{\mathrm{DT}}_{\mathrm{fin}} \| g_1 - g_0 \|_{\holder^{k,\alpha}}
\end{equation} 
on that time interval. This is just the well-posedness of the Ricci-DeTurck flow. 

Following \cite[Theorem A]{BGI}, we now estimate the DeTurck vector fields \[W_i = W_i(\mathfrak{g}^{-1}_i(t), \Gamma(\mathfrak{g}_i(t)), \Gamma(g_0)),\] and their associated diffeomorphisms.   First assume that $k \geq 4$.  Using Shi's well-known estimates for the curvature tensor of the metrics in this
family \cite{Shi}, we can estimate $W_0$ entirely in terms of $\tau$ and the curvature bound for $g_0(t)$ on $[0,\tau]$.  On the other hand, using interpolating 
differences and \eqref{eqn:DTest} below,  we obtain that the norms of $W_1$ and $W_1 - W_0$ are controlled by $\tau$, the curvature of $g_0(t)$ and
the constant  $C^{\mathrm{DT}}_{\mathrm{fin}}$.  Integrating yields estimates for the diffeomorphisms $\phi_t$. We thus obtain a constant $C_{\mathrm{fin}} > 0$ 
such that the Ricci flow solutions $g_i = \phi_i^* \mathfrak{g}_i(t)$ satisfy
\[ 
\|g_1(t) - g_0(t) \|_{ \holder^{k-2,\alpha} } \leq C_{\mathrm{fin}} \| g_1 - g_0 \|_{\holder^{k,\alpha}}. 
\]
The loss of regularity occurs because the $W_i$ involve one derivative of the metric, and pull-back by a diffeomorphism involves a further derivative.

To obtain a stability estimate with no loss of regularity, we proceed as follows. First, by \cite[(7.44)]{CLN}, the difference $\mathfrak{g}_1(t) 
- \mathfrak{g}_0(t)$ satisfies the linear homogeneous parabolic equation
 \begin{align}
\label{diffmetrics}
\partial_t(\mathfrak{g}_1 - \mathfrak{g}_0) &= \mathfrak{g}_1^{jl} \nabtil_j \nabtil_l(\mathfrak{g}_1 - \mathfrak{g}_0) \nonumber \\
\nonumber & + \mathfrak{g}_0^{-1} * \mathfrak{g}_0^{-1} * \nabtil \mathfrak{g}_1 * \nabtil (\mathfrak{g}_1 - \mathfrak{g}_0) 
+ \mathfrak{g}_0^{-1}*\mathfrak{g}_0^{-1}*\nabtil \mathfrak{g}_0 * \nabtil(\mathfrak{g}_1 - \mathfrak{g}_0) \\
\nonumber&+ \mathfrak{g}_1^{-1}*\mathfrak{g}_0^{-1}*\mathfrak{g}_0^{-1} * \nabtil \mathfrak{g}_1 * \nabtil{\mathfrak{g}}_1 * (\mathfrak{g}_1 - \mathfrak{g}_0)\\
&+ \mathfrak{g}_0^{-1}*\tilde{g}^{-1} *\widetilde{Rm}*(\mathfrak{g}_1-\mathfrak{g}_0)+\mathfrak{g}_1^{-1}*\mathfrak{g}_0^{-1}*\mathfrak{g}_1*\tilde{g}^{-1}*\widetilde{Rm}*(\mathfrak{g}_1-\mathfrak{g}_0)\\
\nonumber&+ \mathfrak{g}_1^{-1} * \mathfrak{g}_1^{-1}* \mathfrak{g}_0^{-1}*\nabtil \mathfrak{g}_1 *\nabtil \mathfrak{g}_1*(\mathfrak{g}_1-\mathfrak{g}_0) + \mathfrak{g}_1^{-1} * \mathfrak{g}_0^{-1} * \nabtil^2 \mathfrak{g}_0 * (\mathfrak{g}_0 - \mathfrak{g}_1) \\
\nonumber &- 2(n-1) ( \mathfrak{g}_1 - \mathfrak{g}_0).
\end{align}
The tildes refer to quantities associated to $\gtil = g_0$, and the star * indicates contractions with respect to $g_0$.  The coefficients of these terms involve 
at most one derivative of $\mathfrak{g}_i$, $i=0,1$.  We have
\begin{align} \label{eqn:reg}
\| \mathfrak{g}_1(t) \|_{\holder^{k,\alpha}(M)} &\leq \| \mathfrak{g}_0(t) \|_{\holder^{k,\alpha}(M)} + \| \mathfrak{g}_1(t) - 
\mathfrak{g}_0(t) \|_{\holder^{k,\alpha}(M)} \nonumber \\ & \leq \| \mathfrak{g}_0(t) \|_{\holder^{k,\alpha}(M)} + C^{DT}_{\mathrm{fin}} \|g_1 - g_0 \|_{\holder^{k,\alpha}(M)}.
\end{align} 
Hence there is a $\holder^{k-1,\alpha}$ bound on the coefficients of \eqref{diffmetrics} depending only on $g_0(t), \mathfrak{g}_0(t), C_{\mathrm{fin}}$ and 
the initial distance $g_1 - g_0$.  We now invoke a more or less standard parabolic regularity result, which implies in this setting that
\begin{equation} %\label{eqn:DTest}
 \| \mathfrak{g}_1(t) - \mathfrak{g}_0(t) \|_{\holder^{k+1,\alpha}} < {C^{\prime}} \| g_1 - g_0 \|_{\holder^{k,\alpha}}.
 \end{equation}
This requires some explanation.  First note that we really only need such an estimate in the interval $\tau/2 \leq t \leq \tau$; furthermore,
by \cite{BamlerKleiner}, it is known that there is an absolute lower bound on the time interval $\tau$ which depends only on estimates
for the initial geometry.  This means that although the constant $C^\prime$ will depend on $\tau$, this is still a uniform a priori estimate. 
Finally, we recall a linear estimate which asserts that if $H$ is the solution operator for a second order uniformly parabolic operator $L$ (where
uniformly parabolic corresponds to the geometric assumptions in our setting), so that 
\[
H(t, \cdot):  \holder^{k,\alpha}(M) \to \holder^{k,\alpha}(M)
\]
is bounded for every $t \geq 0$, then
\[
(tL) \circ H(t,\cdot) : \holder^{k,\alpha}(M) \to \holder^{k,\alpha}(M)
\]
is also bounded.  In particular, by interpolation, for $t \in [\tau/2, \tau]$, the norm of 
\[
H(t,\cdot):  \holder^{k,\alpha}(M) \to \holder^{k+2,\alpha}(M)
\]
is controlled in terms of the previous bound and $1/\tau$.  This is sufficient for our purposes.

Now return to the main line of reasoning. Iterate this argument.  Replacing $k$ by $k+1$ in \eqref{eqn:reg} gives
\begin{align*} 
\| \mathfrak{g}_1(t) \|_{\holder^{k+1,\alpha}(M)} &\leq \| \mathfrak{g}_0(t) \|_{\holder^{k+1,\alpha}(M)} + \| \mathfrak{g}_1(t) - \mathfrak{g}_0(t) \|_{\holder^{k+1,\alpha}(M)} \nonumber \\
& \leq \| \mathfrak{g}_0(t) \|_{\holder^{k+1,\alpha}(M)} + C^{\prime\prime} \|g_1 - g_0 \|_{\holder^{k,\alpha}(M)};
\end{align*} 
whence by parabolic regularity again, 
\begin{equation*} 
\| \mathfrak{g}_1(t) - \mathfrak{g}_0(t) \|_{\holder^{k+2,\alpha}} < C^{\prime\prime\prime} \| g_1 - g_0 \|_{\holder^{k,\alpha}}.
 \end{equation*}
Finally, estimating the DeTurck vector field and the diffeomorphism as in \cite{BGI}, we obtain
\[ 
\|g_1(t) - g_0(t) \|_{ \holder^{k,\alpha} } \leq C_{\mathrm{fin}} \| g_1 - g_0 \|_{\holder^{k,\alpha}}, \ \ \frac{\tau}{2} \leq t \leq \tau,
\]
for a new constant $C_{\mathrm{fin}}$.  This completes the proof.
\end{proof}

The proof of Theorem A in \cite{BGI} implicitly assumes the existence of an associated Ricci-DeTurck flow on the same interval as the Ricci flow. 
We justify this in the next section, using a short-time existence result combined with controlled time of existence to obtain a complete proof of 
that Theorem and to extend it to the setting of bounded geometry.

\subsection{Finite-time continuous dependence for the Ricci flow}
\label{sec:ctsdep-rest}
We now turn to the second step, which completes the proof of Theorem \ref{thm:ctsdep}.  
\begin{proof} Given $\tau$ as in the hypothesis Theorem \ref{thm:ctsdep}, choose $T \in (\tau,T_{max}(g_0))$. There exists a complete, bounded curvature Ricci flow solution $g_0(t)$ emanating from $g_0$ on $[0,T]$ which satisfies
	\[ 
	\sup_{ (p,t) \in M \times [0,T] } |\Rm(g_0(t))|_{g_0(t)} \leq K. 
	\]
	All higher covariant derivatives of $\Rm(g_0(t))$ are then bounded in terms of $K$, $\dim M$ and $t$ by Shi's estimates.
	
	We construct a sequence of harmonic map heat flows with source $(M,g_0(t))$ for $t$ in fixed time intervals $[t_{i-1},t_i]$, to be specified below, and with target $(M,g_0(t_{i-1}))$.
	Since the source and target metrics are both complete and enjoy the same curvature bounds, a result of Bamler and Kleiner \cite[Proposition A.9]{BamlerKleiner} 
	gives a time $T_1 > 0$,
	depending only on the curvature bounds of $g_0(t)$ on $[0,T]$ and $\dim M$, so that for any $a < b$ with $|b-a| < T_1$, the harmonic map heat flow 
	emanating from the identity map with source $(M,g(t))$, $t \in [a,b]$ and target $(M,g(a))$  exists and remains a diffeomorphism on $[a,b]$.  Thus 
	if we partition $[0,\tau]$ into $N$ subintervals $I_i := \{ [t_{i-1},t_i] \}_{i=1}^{N}$ with $t_i - t_{i-1} < T_1$, then on each $I_i$ there exist both a Ricci flow and a Ricci-DeTurck flow starting at $g(t_{i-1})$. We emphasize that the Ricci-DeTurck flow differs on each interval due to the choice of reference metric.
	
	By Proposition \ref{prop:ctsdep-rfrdf}, there exist sequences of constants $r_{i-1},
	C_{i} > 0$, so that if for a metric $g_1$ we have $$\| g_1 - g_0(t_{i-1}) \|_{\holder^{k,\alpha}} < r_{i-1},$$ then at the final time $t=t_i$ on the subinterval $I_i$, we have
	$$\|g_1(t_i) - g_0(t_i)\|_{\holder^{k,\alpha}} < C_{i} \|g_1 - g_0(t_{i-1})\|_{\holder^{k,\alpha}},$$ where as usual $g_1(t)$ is the unique solution of the Ricci flow starting at $g_1$.  It is convenient to assume that each $C_i \ge 1.$ 
	
	We may construct $g_1(t)$ on the entire time interval $[0,\tau]$ as follows. By definition of the constants $r_{0} , C_{1}$, if $$\|g_1-g_0\|_{\holder^{k,\alpha}} < r_{0},$$ then the unique Ricci flow solution starting at $g_1$ exists and satisfies  $$\|g_1(t_1) - g_0(t_1)\|_{\holder^{k,\alpha}} < C_{1}\|g_1 - g_0\|_{\holder^{k,\alpha}}.$$ We now shrink $r_{0}> 0$ if necessary so that if $\|g_1 - g_0\|_{\holder^{k,\alpha}}< r_{0}$, then $$C_{1}\|g_1 - g_0\|_{\holder^{k,\alpha}} < r_{1},$$ and so $\|g_1(t_1) - g_0(t_1)\|_{\holder^{k,\alpha}} < r_1.$ Then the solution $g_1(t)$ extends uniquely to the time interval $[t_1, t_2]$,  and satisfies 
	\begin{align*}
	\|g_1(t_2) - g_0(t_2)\|_{\holder^{k,\alpha}} &< C_{2} \|g_1(t_1) - g_0(t_1)\|_{\holder^{k,\alpha}} \\
	&< C_{2} C_{1}\|g_1 - g_0\|_{\holder^{k,\alpha}}
	\end{align*}
	Here we've used $C_{2} \ge 1$.
	In the next step we shrink $r_{0}$ if necessary so that $C_{2} C_{1} \|g_1-g_0|\|_{\holder^{k,\alpha}}< r_{2}$, and proceed as above. Continuing this a finite number of times, we have generated a solution $g_1(t)$ of the Ricci flow starting at $g_1$ that satisfies
	$$\|g_1(T) - g_0(T)\|_{\holder^{k,\alpha}} < \left( \prod_{i=1}^N C_{i} \right) ||g_1 - g_0\|_{\holder^{k,\alpha}}$$
	on the time interval $[0,T].$ Setting $C_{\mathrm{init}}$ equal to the final $r_{0}$ obtained in this process and $C_{\mathrm{fin}} = \prod_{i=1}^N C_i$ completes the proof. 
	
\end{proof}

\section{An application to Ricci flow near rotationally symmetric AH metrics}
\label{sec:appl}

As an application of the previous result, we prove the convergence stability for Ricci flow around asymptotically hyperbolic rotationally symmetric solutions which converge to the hyperbolic metric. This is {\it not} a small data result since we allow initial starting metrics 
which may be very far away from any fixed point of the flow.   This argument adapts directly to prove convergence stability in the
neighborhood of any infinite-time solution of Ricci flow (or any other geometric flow) which converges to a strictly stable stationary point
of the flow. We state this more formally below, see Theorem \ref{thm:gen}. 

We now consider the space of metrics on the open ball $B^n$ which are asymptotically hyperbolic (AH) in the sense that 
they are conformally compact, with sectional curvatures tending to $-1$ at infinity. Recall that a metric $g$ on $B^n$ (or indeed 
on the interior of any compact manifold with boundary) is defined to be conformally compact if, for any
defining function $\rho$ for $\del M$, the metric $\overline{g} = \rho^2 g$ extends to be a smooth metric on $\overline{B^n}$ (or
$\overline{M}$).  The prototype is the Poincar\'e metric $g_h = \rho^{-2} g_E$, where $g_E$ is the standard Euclidean metric
on the ball and $\rho = (1-|z|^2)/2$.   Any AH metric determines a conformal class, called its conformal infinity, on $\del M$.
An AH metric $g$ is called Poincar\'e-Einstein if it has this conformally compact AH structure
and in addition satisfies the Einstein equation $\Rc(g) = -(n-1)g$.   Any such metric is a fixed point of the normalized Ricci flow
\begin{equation} \label{eqn:NRF}
\frac{\partial g}{\partial t} = - 2 [ \, \Rc ( g(t) ) + (n-1) g(t) \, ].
\end{equation} 
We refer to this below simply as Ricci flow. 

As an aside, the perturbation result by Graham-Lee \cite{GL} and Biquard \cite{Biq} states that near to a given Poincar\'e-Einstein metric $g_0$
(which satisfies a generic nondegeneracy condition), there is an infinite dimensional family of metrics near to $g_0$ in the unweighted
H\"older norm $\holder^{k,\alpha}$, parametrized by their conformal infinities. These are all also stationary points of \eqref{eqn:NRF}. Hence any
stability result for normalized Ricci flow, e.g. for the hyperbolic metric $g_h$ on $B^n$, must require that the initial metric $g_0$ 
be asymptotic to $g_h$ at infinity. We thus assume that $g_0 - g_h$ lies in a weighted H\"older space
$\holder^{k,\alpha}_{\mu}(M) = \rho^{\mu} \holder^{k,\alpha}$ for some $\mu > 0$.  (These spaces can be defined easily for symmetric
$2$-tensors, and covariant derivatives and distances can then be taken with respect to $g_h$.)  Notice that any metric decaying
in this way to $g_h$ has the same conformal infinity. 

It has been proved by several authors \cite{{Bahuaud},{Bam}, {LiYin}, {QSW}, {SSS}} that $g_h$ is a stable fixed point of both the Ricci-DeTurck and
Ricci flows in spaces of metrics which are asymptotic at some rate to $g_h$. In particular, we have
\begin{theorem} \label{thm:stabhyp}  Let $(M=B^n,g_h)$ be the Poincar\'e ball, and fix $k\geq 2$ and $\mu \in (0,n-1)$. There exists $\vep > 0$ such 
that if  
\begin{equation}
\| 
g_0 -g_h 
\|_{\holder^{k,\alpha}_{\mu}} < \vep,
\end{equation}
then normalized Ricci-DeTurck flow starting at $g_0$ exists for all time and converges to $g_h$ in $C^{k,\alpha}_{\mu}$ norm.   Similarly, the Ricci flow solution 
starting at $g_0$ exists for all time and converges to $g_h$.
\end{theorem}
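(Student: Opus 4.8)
The plan is to treat the normalized Ricci-DeTurck flow as an abstract semilinear parabolic equation on a weighted little H\"older space and to invoke the principle of linearized stability from analytic semigroup theory. First I would fix the reference metric $\gtil = g_h$ and write the normalized Ricci-DeTurck flow for $\mathfrak{g}(t)$; setting $u = \mathfrak{g} - g_h$ and using that $g_h$ is a stationary point, the equation takes the form
\begin{equation*}
\del_t u = L u + Q(u),
\end{equation*}
where $L$ is the linearization at $g_h$ of the normalized Ricci-DeTurck operator and $Q$ gathers the terms that are at least quadratic in $u$ and its first two derivatives. I would work on the Banach space $X = \holder^{k,\alpha}_{\mu}$ (in the little H\"older sense) with domain $D = \holder^{k+2,\alpha}_{\mu}$, so that $L : D \to X$ and $Q : D \to X$ are defined, with $Q(0) = 0$ and $DQ(0) = 0$.

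The analytic core is the spectral analysis of $L$. By \cite[Theorem 1]{BGIM}, $L$ is sectorial on $X$, hence generates an analytic semigroup; what remains is to show that its spectrum lies in a half-plane $\{\Re\lambda \le -\delta\}$ for some $\delta > 0$. On the Poincar\'e ball the operator $L$ is explicit: it is a Lichnerowicz-type Laplacian on symmetric $2$-tensors coming from the DeTurck gauge, shifted by the normalization term $-2(n-1)$. Near the conformal boundary $L$ has constant-coefficient indicial behavior, and I would compute its indicial roots to verify that conjugation by the weight $\rho^{\mu}$, for $\mu \in (0,n-1)$, moves the entire spectrum strictly to the left of the imaginary axis. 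This strict spectral gap is exactly the point at which the hypothesis $\mu \in (0,n-1)$ is used, and it is the step I expect to be the main obstacle: one must check that for every admissible weight no part of the (essential or $L^2$) spectrum crosses into $\Re\lambda \ge 0$, which requires careful bookkeeping of the tensorial and curvature contributions to the indicial equation.

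With sectoriality and the spectral gap in hand, the nonlinear step is standard. Since $Q$ is smooth with $Q(0)=0$ and $DQ(0)=0$, it is locally Lipschitz near $0$ with arbitrarily small Lipschitz constant on small balls. The principle of linearized stability (e.g.\ \cite{Lunardi}) then produces $\vep > 0$ so that whenever $\|u(0)\|_X = \|g_0 - g_h\|_{\holder^{k,\alpha}_{\mu}} < \vep$, the solution exists for all $t \ge 0$, remains small, and satisfies an exponential decay estimate $\|u(t)\|_{X} \le C e^{-\delta' t}\|u(0)\|_X$ for some $\delta' > 0$. This establishes global existence and convergence to $g_h$ in $\holder^{k,\alpha}_{\mu}$ for the normalized Ricci-DeTurck flow.

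Finally I would transfer this to the ungauged normalized Ricci flow by the DeTurck correction. Let $\phi_t$ solve $\tfrac{d}{dt}\phi_t = -W(\mathfrak{g}(t)) \circ \phi_t$ with $\phi_0 = \mathrm{id}$, so that $g(t) = (\phi_t^{-1})^{*}\mathfrak{g}(t)$ solves \eqref{eqn:NRF}. The DeTurck field $W(\mathfrak{g}(t))$ vanishes when $\mathfrak{g} = g_h$ and is controlled by $\|u(t)\|_{\holder^{k,\alpha}_{\mu}}$, hence decays exponentially and is integrable in $t$; therefore $\phi_t$ converges to a limiting diffeomorphism $\phi_\infty$ that is asymptotic to the identity at the conformal boundary. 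Passing to the limit gives $g(t) \to (\phi_\infty^{-1})^{*} g_h = g_h$, which is the asserted convergence of the Ricci flow.
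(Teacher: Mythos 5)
Your overall strategy coincides with the paper's proof: gauge at $\gtil = g_h$, sectoriality of the linearization on $\holder^{k,\alpha}_{\mu}$ from \cite[Theorem 1]{BGIM}, a spectral gap on the weighted space, the linearized stability principle of \cite{Lunardi} for the gauged flow, and integration of the DeTurck vector field to pass to the ungauged flow. The step you defer and flag as the main obstacle --- locating the spectrum on the weighted H\"older space --- is handled in the paper by citation rather than computation: the Koiso--Bochner formula \cite[pg.78]{LeeFredholm} places the $L^2$ spectrum in $[n-2,\infty)$, and the $0$-pseudodifferential calculus \cite[Theorem 3.27]{Ma-edge}, \cite[Proposition 3]{MazPac}, \cite[Proposition 9]{BiquardMazzeo} gives the criterion that $(\Delta_L^{g_h}-\lambda)^{-1}$ is bounded on $\holder^{k,\alpha}_{\mu}$ if and only if the indicial root satisfies $\gamma_1^+(\lambda) > (n-1)/2+\mu$; this yields a half-plane $\{\Re\lambda < \omega\}$, $\omega>0$, in the resolvent set exactly for $\mu \in (0,n-1)$ (Proposition \ref{prop:bdresolvent}). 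So that part of your plan is sound and executable with existing tools.

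The genuine flaw is your final equality $(\phi_\infty^{-1})^{*} g_h = g_h$. Being asymptotic to the identity at the conformal boundary does not make $\phi_\infty$ an isometry of $g_h$: an isometry of hyperbolic space which fixes the boundary sphere pointwise is the identity map, so your equality would force $\phi_\infty = \mathrm{id}$, i.e.\ $W \equiv 0$ along the entire flow, which holds only when the Ricci--DeTurck and Ricci flows coincide. What the argument actually delivers --- and what the paper's own discussion states --- is that the ungauged flow converges to the pullback of $g_h$ by the limiting diffeomorphism, a metric isometric to, but in general not equal to, $g_h$ as a tensor field. The conclusion ``converges to $g_h$'' must therefore be understood as convergence to a hyperbolic metric in this sense (or one must separately argue why the limit is literally $g_h$, which your appeal to boundary asymptotics does not do); as written, your last line claims strictly more than the argument proves.
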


We briefly sketch a proof. Choose the reference metric $\gtil = g_h$.   By \cite{BGIM}, the linearized Ricci-DeTurck flow at any metric near to
$g_h$ in this norm is sectorial on $\holder^{k,\alpha}_{\mu}$.   For the hyperbolic metric itself, this linearization is $DF_{g_h} = \Delta_L^{g_h} - 2(n-1)$, 
where $\Delta_L^{g_h}$ is the Lichnerowicz Laplacian on symmetric $2$-tensors with respect to the hyperbolic metric.  

Acting on $L^2$, $DF_{g_h}$ is self-adjoint, and by the Koiso-Bochner formula \cite[pg.78]{LeeFredholm}, 
\[
\mathrm{spec}_{L^2} (DF_{g_h}) \subset \left[ (n-2), \infty \right).
\]
The spectrum of this operator on other function spaces can be determined by recognizing elements of the resolvent family $(\Delta_L^{g_h} - \lambda)^{-1}$ 
as $0$-pseudodifferential operators in the sense of \cite{Ma-edge}.   We sketch this briefly, but refer to Section 2.6 of \cite{BGIM} for more details.
One first defines the {\it indicial roots} of $\Delta_L^{g_h} - \lambda$ as the values $\gamma$ such that
\[
(\Delta_L^{g_h} - \lambda) \rho^{\gamma} (k/\rho^2) = \mathcal O( \rho^{\gamma+1}),
\]
for some two-tensor $k$ which is smooth up to $\del M$.  Note that the extra factor of $\rho^{-2}$ is inserted so that $\rho^\gamma (k/\rho^2)$ decays
(in norm) at least like $\rho^\gamma$.  In order for $\gamma$ to be an indicial root, there must be a leading order cancellation in the series expansion of
the left hand side, and hence these indicial roots are roots of an algebraic equation. For this particular operator, and working only with AH metrics (with 
sectional curvatures all tending to $-1$), there are six such values \cite[Proposition 3]{MazPac}, and these are independent of $p \in \del M$ 
but depend (algebraically) on $\lambda$; they arise in pairs, as $\gamma_1^{\pm}, \gamma_2^\pm, \gamma_3^{\pm}$, symmetric around $(n-1)/2$.  
If $\lambda$ is real and $\lambda < (n-2)$, these roots are ordered:
\[
\gamma_3^-(\lambda) < \gamma_2^-(\lambda) < \gamma_1^-(\lambda) <  \frac{n-1}{2} < \gamma_1^+(\lambda) < \gamma_2^+(\lambda) < \gamma_3^+(\lambda).
\]
On the other hand, if $\lambda > n-2$, these may become complex conjugate pairs, with $\mathrm{Re}\, \gamma_j^\pm = (n-1)/2$.  This
threshold $(n-1)/2$ occurs because $\rho^{(n-1)/2} (k/\rho^2)$ is just on the border of lying in $L^2$ near $\rho=0$, which is what determines 
the $L^2$ continuous spectrum. The point of all this is that by \cite[Theorem 3.27]{Ma-edge}, see also \cite[Proposition 9]{BiquardMazzeo}, 
$(\Delta_L^{g_h} - \lambda)^{-1}$ is bounded on $\holder^{k,\alpha}_\mu$ if and only if $\gamma_1^+(\lambda) > (n-1)/2 + \mu$.  In addition,
we can ensure that this inequality holds by choosing $\lambda$ to lie in a left half-plane which includes the imaginary axis.  More precisely: 
\begin{prop}
\label{prop:bdresolvent}
Fix $\mu \in (0, n-1)$.  Then there exists some $\omega > 0$ such that for all $\lambda$ with $\mathrm{Re} \lambda < \omega$, $\gamma_1^+(\lambda) > 
(n-1)/2 + \mu$, and hence the resolvent $R(\lambda)$ is bounded on $\holder^{k,\alpha}_\mu(M, g_h)$.  Consequently, $\{\lambda: \mathrm{Re}\, \lambda < \omega\}$
is contained in the $\holder^{k,\alpha}_\mu$ resolvent set. 
\end{prop}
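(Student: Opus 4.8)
The plan is to make $\gamma_1^+(\lambda)$ explicit as a function of $\lambda$ and then quote the edge-calculus mapping criterion. From the structure recalled just before the statement, $\gamma_1^\pm(\lambda)$ are the two indicial roots nearest to, and symmetric about, the line $(n-1)/2$, and they solve the quadratic produced by the indicial operator of $\Delta_L^{g_h}-\lambda$. The spectral parameter enters that indicial operator only through its additive constant, so the discriminant of this quadratic is affine in $\lambda$ with slope $-1$; since the excerpt records that $\gamma_1^\pm$ collide at $(n-1)/2$ exactly when $\lambda=n-2$, the discriminant must be $(n-2)-\lambda$, and therefore
\begin{equation*}
\gamma_1^+(\lambda)=\frac{n-1}{2}+\sqrt{(n-2)-\lambda},
\end{equation*}
with $\sqrt{\cdot}$ the principal branch. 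The first point to record is that on the half-plane $\{\mathrm{Re}\,\lambda<n-2\}$ the quantity $(n-2)-\lambda$ lies in the open right half-plane, where the principal root is holomorphic and single-valued; thus $\gamma_1^+$ is unambiguously defined there, with $\mathrm{Re}\,\gamma_1^+>(n-1)/2$, and no branch crossing with $\gamma_2^+$ can occur.

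Next I would choose $\omega=(n-2)-\mu^2$ and verify the inequality on $\{\mathrm{Re}\,\lambda<\omega\}$. For complex $\lambda$ the operative condition is on $\mathrm{Re}\,\gamma_1^+(\lambda)$, since the weight in $\holder^{k,\alpha}_\mu$ measures decay through $|\rho^\gamma|=\rho^{\mathrm{Re}\,\gamma}$, and this reduces to the stated inequality when $\lambda$ is real. Setting $z=(n-2)-\lambda$, so that $\mathrm{Re}\,z=(n-2)-\mathrm{Re}\,\lambda>\mu^2>0$, the elementary identity $\mathrm{Re}\,\sqrt z=\sqrt{\tfrac12(|z|+\mathrm{Re}\,z)}$ together with $|z|\ge \mathrm{Re}\,z$ gives
\begin{equation*}
\mathrm{Re}\,\sqrt z\ \ge\ \sqrt{\mathrm{Re}\,z}\ >\ \sqrt{(n-2)-\omega}\ =\ \mu ,
\end{equation*}
whence $\mathrm{Re}\,\gamma_1^+(\lambda)>\tfrac{n-1}{2}+\mu$ throughout the half-plane. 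The infimum of the left side over $\{\mathrm{Re}\,\lambda<\omega\}$ is attained only in the limit along the real axis as $\lambda\uparrow\omega$, so this choice of $\omega$ is the largest the argument allows. Here $\omega=(n-2)-\mu^2>0$ precisely when $\mu<\sqrt{n-2}$, which is the regime giving a genuine left half-plane with positive $\omega$; confirming that the full stated range $\mu\in(0,n-1)$ is compatible with $\omega>0$, rather than only the ambient decay range used for the stability setup, is the one numerological point I would verify carefully against \cite{MazPac}.

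Finally I would invoke the boundedness criterion of \cite[Theorem 3.27]{Ma-edge}, equivalently \cite[Proposition 9]{BiquardMazzeo}: because $\gamma_1^+$ is the smallest positive-side root it is the binding one, so the strict inequality $\mathrm{Re}\,\gamma_1^+(\lambda)>(n-1)/2+\mu$ just established forces $(\Delta_L^{g_h}-\lambda)^{-1}$ to be bounded on $\holder^{k,\alpha}_\mu(M,g_h)$ for every $\lambda$ in the half-plane. This exhibits $\{\mathrm{Re}\,\lambda<\omega\}$ inside the $\holder^{k,\alpha}_\mu$ resolvent set, which is the assertion.

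I expect the crux to be the passage from real to complex $\lambda$. Two things must be nailed down: that $\gamma_1^+$ really is carried by the single holomorphic branch $\tfrac{n-1}{2}+\sqrt{(n-2)-\lambda}$ as $\lambda$ leaves the real axis---this is exactly why keeping $\mathrm{Re}\,\lambda<n-2$, so that the radicand stays in the right half-plane, is indispensable---and that the edge-calculus criterion, cleanest to state for real parameters, continues to be governed by the real part of the indicial root for complex $\lambda$. Once these are secured, the scalar inequality $\mathrm{Re}\,\sqrt z\ge\sqrt{\mathrm{Re}\,z}$ does all the remaining work.
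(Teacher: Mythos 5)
Your architecture --- make $\gamma_1^+(\lambda)$ explicit, then invoke the mapping criterion of \cite[Theorem 3.27]{Ma-edge} --- is exactly the paper's (the paper offers no more detail than this: the discussion preceding the proposition \emph{is} its proof). The gap is in the step where you pin down the formula. From the sentence ``if $\lambda > n-2$, these \emph{may} become complex conjugate pairs'' you infer that $\gamma_1^{\pm}$ collide at $(n-1)/2$ exactly when $\lambda = n-2$, hence that the discriminant is $(n-2)-\lambda$. That inference is not valid: $n-2$ is the Koiso--Bochner \emph{lower bound for the $L^2$ spectrum}, not the branching threshold of the indicial roots, and the paper's wording is deliberately non-committal about where the collision happens. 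Computing the indicial operator of the relevant operator $DF_{g_h}-\lambda = \Delta_L^{g_h}-2(n-1)-\lambda$ in an orthonormal frame (or consulting \cite[Proposition 3]{MazPac}), the pair of roots nearest the critical line comes from the tangential trace-free block and is
\[
\gamma_1^{\pm}(\lambda) \;=\; \frac{n-1}{2} \,\pm\, \sqrt{\tfrac{(n-1)^2}{4}-\lambda},
\]
so the collision occurs at $\lambda = \tfrac{(n-1)^2}{4}$, which equals $n-2$ only when $n=3$. A quick sanity check exposes the error: at $\lambda = 0$ the roots must be $0$ and $n-1$, the classical indicial roots of the linearized gauged Einstein operator underlying the Graham--Lee isomorphism \cite{GL} on weights $\mu \in (0,n-1)$; your formula gives $\tfrac{n-1}{2}\pm\sqrt{n-2}$, wrong for every $n>3$.

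The error is material, not cosmetic. With your radicand the constant $\omega = (n-2)-\mu^2$ is positive only for $\mu < \sqrt{n-2}$, so the argument does not prove the proposition on the stated range $\mu \in (0,n-1)$; you flag this yourself but leave it unresolved, and indeed your formula would make the statement false for $\mu \ge \sqrt{n-2}$. Correcting the radicand, your own estimate (the inequality $\mathrm{Re}\,\sqrt{z}\ge\sqrt{\mathrm{Re}\,z}$ is the right tool and is used correctly) gives $\omega = \tfrac{(n-1)^2}{4}-\mu^2$, positive for $\mu < \tfrac{n-1}{2}$ --- better, but still not all of $(0,n-1)$. To cover the full range one must use the two-sided form of the edge-calculus criterion: the weight must lie strictly between the two middle indicial roots, $(n-1)-\mathrm{Re}\,\gamma_1^+(\lambda) < \mu < \mathrm{Re}\,\gamma_1^+(\lambda)$, equivalently $\mathrm{Re}\,\gamma_1^+(\lambda) > \tfrac{n-1}{2}+\bigl|\mu-\tfrac{n-1}{2}\bigr|$; running your computation on this condition yields $\omega = \mu(n-1-\mu) > 0$ for every $\mu\in(0,n-1)$, which is evidently what the proposition intends (the paper's one-sided phrasing of the criterion reads correctly only if $\mu$ is measured from the critical exponent $(n-1)/2$ rather than from $0$). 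So: keep your architecture, replace the radicand, and work with the two-sided weight window rather than the one-sided inequality.
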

From this we obtain, using the formalism of analytic semigroups (see e.g. \cite{Lunardi}), the stability result
\begin{corollary} 
\label{cor:LunStab}
The hyperbolic metric $g_h$ is a stable fixed point for the Ricci-DeTurck flow on $\holder^{k,\alpha}_\mu$. In other words, there exists $\vep > 0$ so that
if $g_0$ is AH and $||g_0 -g_h||_{\holder^{k,\alpha}_{\mu}} < \vep$, then the (normalized) Ricci-DeTurck flow starting at $g_0$ exists for all time and converges
in norm at an exponential rate to $g_h$: 
\begin{equation}
\| \mathfrak{g}_0 (t) - g_h \|_{\holder^{k,\alpha}_{\mu}} \leq C e^{-\omega t} \| g_0 - g_h \|_{\holder^{k,\alpha}_{\mu}}.
\label{expest}
\end{equation}
\end{corollary}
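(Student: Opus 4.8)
The plan is to recast the normalized Ricci-DeTurck flow as an abstract quasilinear parabolic equation for the perturbation $u(t) = \mathfrak g_0(t) - g_h$ and then to invoke the principle of linearized stability for sectorial operators, exactly as developed in \cite{Lunardi}. Since $g_h$ is a stationary point of the flow with reference metric $\gtil = g_h$, writing $F := F^{\gtil}$ gives $F(g_h) = 0$, and the equation for $u$ becomes
\begin{equation*}
\partial_t u = \mathcal A u + Q(u), \qquad u(0) = g_0 - g_h,
\end{equation*}
where $\mathcal A := DF_{g_h} = \Delta_L^{g_h} - 2(n-1)$ is the linearization and $Q(u) := F(g_h + u) - \mathcal A u$ is the nonlinear remainder. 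Because $F$ is a smooth map $\holder^{k+2,\alpha}_\mu \to \holder^{k,\alpha}_\mu$ with $Q(0) = 0$ and $DQ(0) = 0$, the remainder obeys a quadratic estimate of the form $\| Q(u) - Q(v) \|_{\holder^{k,\alpha}_\mu} \le C(r)\, \| u - v \|_{\holder^{k+2,\alpha}_\mu}$ with $C(r) \to 0$ as $r = \max(\|u\|, \|v\|) \to 0$.

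The first step is to convert the resolvent information into semigroup decay. Proposition \ref{prop:bdresolvent} locates the spectrum of $\mathcal A$ so that the resolvent set contains the half-plane $\{ \Re \lambda < \omega \}$ for some $\omega > 0$, together with the sharp bound $\| R(\lambda) \| \le C/|\lambda - \omega|$; combined with the sectoriality of $\mathcal A$ from \cite{BGIM} this lets one represent the solution semigroup by its Dunford contour integral and read off both the exponential decay
\begin{equation*}
\| e^{t \mathcal A} u \|_{\holder^{k,\alpha}_\mu} \le C e^{-\omega t} \| u \|_{\holder^{k,\alpha}_\mu}, \qquad t > 0,
\end{equation*}
and the companion smoothing estimate $\| e^{t \mathcal A} \|_{\holder^{k,\alpha}_\mu \to \holder^{k+2,\alpha}_\mu} \le C\, t^{-1} e^{-\omega t}$. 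It is essential here that Proposition \ref{prop:bdresolvent} furnishes the full resolvent bound, and not merely spectral containment, since that is what forces these semigroup estimates to hold uniformly down to the imaginary axis.

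With the linear estimates in hand, the second step is the nonlinear argument. Combining the local well-posedness that follows from sectoriality with the Duhamel representation $u(t) = e^{t\mathcal A} u_0 + \int_0^t e^{(t-s)\mathcal A} Q(u(s))\, ds$, one seeks a fixed point in a space of exponentially weighted trajectories, set up in the maximal-regularity class so that the two derivatives lost in $Q$ are exactly those regained by the smoothing of $e^{t\mathcal A}$. The linear term is governed by the decay estimate above, the integral term is bounded using the quadratic estimate on $Q$ against the smoothing kernel, and for $\| u_0 \|_{\holder^{k,\alpha}_\mu}$ small the map becomes a contraction on a small ball. This simultaneously yields a global-in-time solution — the a priori exponential bound rules out finite-time breakdown through the continuation criterion — and, since $Q$ is higher order so that the nonlinear correction is subdominant, allows the solution to inherit the linear rate, giving the estimate \eqref{expest} with the same $\omega$.

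The main obstacle is the genuinely quasilinear structure of the flow: the principal coefficient of $F$ is $\mathfrak g^{pq}$, so $Q(u)$ contains the top-order expression $[(g_h + u)^{pq} - g_h^{pq}]\, \nabtil^2 u$, which cannot be estimated in $\holder^{k,\alpha}_\mu$ without spending \emph{all} of the regularity regained by the semigroup. Consequently the naive Duhamel iteration meets a borderline $t^{-1}$ smoothing exponent for which it does not close, and the resolution is to run the fixed point in the space of maximal parabolic regularity — equivalently, to freeze the principal part at $g_h$ and treat the coefficient difference as a small perturbation in that norm — which is precisely the framework of \cite{Lunardi}. Verifying that the nonlinear remainder is an admissible small perturbation in this norm, uniformly against the exponential weight, is the technical heart of the matter; once that is checked, the conclusion of the corollary, including the exponential rate $\omega$, follows from the abstract stability theorem.
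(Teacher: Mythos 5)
Your proposal is correct and takes essentially the same approach as the paper: the paper's own proof is a one-line appeal to the analytic semigroup formalism of \cite{Lunardi}, applied to the resolvent/half-plane information of Proposition \ref{prop:bdresolvent} together with the sectoriality from \cite{BGIM}. What you have written out — semigroup decay and smoothing from the Dunford integral, then the exponentially weighted, maximal-regularity fixed-point argument handling the quasilinear top-order term — is precisely the content of that cited formalism.
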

\begin{remark}
This same proof can be applied more generally. If $g$ is any Poincar\'e-Einstein metric, then the essential spectrum of $\Delta_L^g$ lies in
$[ n-2, \infty)$, but there could be point spectrum in $(-\infty, n-2)$. In many cases, it can be shown that this point spectrum is contained entirely
in the positive real line $(0, n-2)$, and if this is the case, then the analogue of Proposition \ref{prop:bdresolvent} and Corollary \ref{cor:LunStab} apply to $(M,g)$ as well. 
\end{remark}

We next consider the situation for the ungauged (normalized) Ricci flow.  First note that \eqref{expest} implies that the DeTurck vector field $W$ associated
to the solution $\mathfrak g_0(t)$ decays exponentially as $t \to \infty$.  It also decays at spatial infinity.  We can thus integrate this time-dependent 
vector field, via \eqref{eqn:ODEW}, and conclude that the one-parameter family of diffeomorphisms $\phi_t$ converges to a limiting diffeomorphism $\phi_{\infty}$. 
This implies that $g_0(t) = (\phi^{-1}_t)^* \mathfrak{g}_0(t)$ is the (unique) Ricci flow solution which emanates from $g_0$, exists for all time 
and converges to $\phi^*_{\infty} g_h$. 

We come finally to the main result of this section, namely that convergence stability holds for the Ricci flow beginning at any metric sufficiently close to a 
rotationally symmetric metric AH metric whose Ricci flow trajectory converges to $g_h$.  Suppose that $g$ is a smooth, rotationally symmetric AH metric on 
$B^n$ such that $g-g_h \in \holder^{k,\alpha}_{\mu}(M)$ for
some $k \geq 2$ and $\mu \in (0, n-1)$, but where this norm is only assumed to be finite, but not small.   The main theorem of \cite{BW} asserts that 
if the `tangential' sectional curvatures of $g$, i.e., the sectional curvatures associated to $2$-planes tangent to the angular $\mathrm{SO}(n)$ orbits, 
are strictly negative, then the Ricci flow $g(t)$ emanating from $g$ exists for all time, remains AH and rotationally symmetric, and converges exponentially
to $g_h$.  The novelty, of course, is that $g = g(0)$ may be quite far from $g_h$. 

To verify that there are AH rotationally symmetric metrics which lie outside any neighborhood of the hyperbolic metric, we consider metrics on a ball in $\mathbb R^n$ (or possibly all of $\mathbb R^n$) of the form 
\[ 
g = \phi^2(r) dr^2 + \psi^2(r) g_{\mathbb{S}^{n-1}}. 
\]
The hyperbolic metric assumes this form in several different ways. For example, in geodesic polar coordinates, 
\begin{equation}
g_h = dr^2 + \sinh^2r  g_{\mathbb{S}^{n-1}},
\end{equation}
while in polar coordinates in the Poincar\'e ball model,
\[
g_0 = \frac{ 4}{(1-r^2)^2} dr^2 + \frac{4r^2}{(1-r^2)^2} g_{\mathbb{S}^{n-1}}.
\]
We restrict here to rotationally symmetric metrics with $\psi(r) = \sinh r$ and make the ansatz that $\phi(r) = \sqrt{ 1 + w(r)^2 }$. Smoothness of $g_0$ at the origin requires
that $w$ has an even Taylor expansion in $r$, with $w(0) = 0$. The decay of $g_0$ to $g_h$ at infinity is implied by requiring that the Taylor coefficient $w^{(j)}(r) = O(e^{-\mu r})$
for $j \leq k+1$. With these conditions, $g_0$ is AH.  The tangential sectional curvatures then equal
\begin{equation}
\sec_T(r) = \frac{1}{\psi^2} - \frac{\psi_r^2}{\phi^2 \psi^2}.
\end{equation}
By a short calculation, if we assume $w^2 \leq \sinh^2 r$, then $\sec_T(r) < 0$.   Hence by choosing $w$ appropriately, we obtain the existence of 
rotationally symmetric AH metrics with $||g_0 - g_h||_{\holder^{k,\alpha}_{\mu}}$ arbitrarily large: $g_0$ can be chosen to lie outside any stability neighborhood of $g_h$
in this norm. 

We are in position to state and prove our main convergent stability result: 
 
\begin{theorem}
\label{thm:AHConvStab} Let $M = B^n$ and $g_h$ the Poincar\'e hyperbolic metric. Suppose that $g_{\star}$ is a complete, rotationally symmetric, AH metric 
with $g_{\star} - g_h \in \holder^{k,\alpha}_{\mu}$ for some $\mu \ in \ (0,n-1)$.  Suppose too that $g_{\star}$ satisfies the restriction on tangential curvatures
described above. Then there exists a constant $\delta > 0$ such that if $g_0$ is any AH metric with 
\begin{equation}
\| g_0 - g_{\star} \|_{\holder^{k,\alpha}_{\mu}} < \delta,
\end{equation}
then the normalized Ricci flow starting at $g_0$ exists for all time and converges to $g_h$.
\end{theorem}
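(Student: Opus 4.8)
The plan is to run the standard convergence-stability argument about the reference trajectory: flow the reference metric forward until it enters the stable basin of $g_h$, use finite-time continuous dependence to carry a whole neighborhood of the reference initial data into that basin, and then invoke the local asymptotic stability of $g_h$. Three ingredients combine: the non-perturbative convergence $g_\star(t) \to g_h$ supplied by \cite{BW}; the local stability of $g_h$ in the weighted norm (Corollary \ref{cor:LunStab}); and the finite-time continuous dependence of Theorem \ref{thm:ctsdep}, applied relative to the reference trajectory $g_\star(t)$. Throughout, ``Ricci flow'' means the normalized flow \eqref{eqn:NRF}, for which $g_h$ is a fixed point.

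First I would fix the stability radius. Let $\vep>0$ be the constant of Corollary \ref{cor:LunStab} associated to $g_h$ on $\holder^{k,\alpha}_\mu$. By the main theorem of \cite{BW}, the trajectory $g_\star(t)$ exists for all time, remains AH and rotationally symmetric, and converges exponentially to $g_h$ in $\holder^{k,\alpha}_\mu$; hence there is a time $\tau>0$ with
\[
\| g_\star(\tau) - g_h \|_{\holder^{k,\alpha}_\mu} < \tfrac{\vep}{2}.
\]
Next I would apply finite-time continuous dependence about $g_\star$ on $[0,\tau]$. Since $g_\star-g_h\in\holder^{k,\alpha}_\mu$, the metric $g_\star$ is a bounded-geometry perturbation of $g_h$ and $g_\star(t)$ has uniformly bounded geometry on $[0,\tau]$, so Theorem \ref{thm:ctsdep} applies with reference $g_\star$. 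This yields constants $C_{\mathrm{init}},C_{\mathrm{fin}}>0$, depending only on $g_\star$ and $\tau$, such that if $\|g_0-g_\star\|_{\holder^{k,\alpha}_\mu}<C_{\mathrm{init}}$ then the Ricci flow $g_0(t)$ exists on $[0,\tau]$ and
\[
\| g_0(\tau) - g_\star(\tau) \|_{\holder^{k,\alpha}_\mu} \le C_{\mathrm{fin}}\, \| g_0 - g_\star \|_{\holder^{k,\alpha}_\mu}.
\]
Setting $\delta := \min\{C_{\mathrm{init}},\,\vep/(2C_{\mathrm{fin}})\}$ and combining the two displays with the triangle inequality, any $g_0$ with $\|g_0-g_\star\|_{\holder^{k,\alpha}_\mu}<\delta$ satisfies $\|g_0(\tau)-g_h\|_{\holder^{k,\alpha}_\mu}<\vep$; in particular $g_0(t)$ remains AH on $[0,\tau]$. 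Corollary \ref{cor:LunStab} then applies to the initial datum $g_0(\tau)$: the flow emanating from $g_0(\tau)$ exists for all time and converges to $g_h$. By uniqueness of bounded-curvature Ricci flow this is the continuation of $g_0(t)$, so $g_0(t)$ exists for all $t\ge 0$ and converges to $g_h$, as claimed.

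The main obstacle is that Theorem \ref{thm:ctsdep} is stated in the unweighted bounded-geometry norm $\holder^{k,\alpha}$, whereas the stable-basin result lives in the weighted norm $\holder^{k,\alpha}_\mu$; the argument above genuinely requires the continuous-dependence estimate in the weighted space, since entering the basin is a statement about decay at infinity. I would therefore verify that Proposition \ref{prop:ctsdep-rfrdf} and Theorem \ref{thm:ctsdep} carry over verbatim with $\holder^{k,\alpha}$ replaced by $\holder^{k,\alpha}_\mu$. The key points are that the linearized Ricci-DeTurck operator is sectorial on the weighted spaces by \cite{BGIM}, that the parabolic regularity step is local and hence weight-preserving, and that the DeTurck vector-field and diffeomorphism estimates respect the decay rate $\rho^\mu$, since the reference metric and all nearby solutions are asymptotic to $g_h$ at the same rate; this keeps $g_0(t)-g_\star(t)$ in $\holder^{k,\alpha}_\mu$ throughout $[0,\tau]$ and makes the chaining argument of Section 2 run in the weighted norm. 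The passage from the unnormalized to the normalized flow is harmless, as the normalization contributes only the zeroth-order shift already present in the sectorial generator $\Delta_L^{g_h}-2(n-1)$. Granting this weighted version of Section 2, the steps above close the proof.
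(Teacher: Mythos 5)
Your proposal follows essentially the same route as the paper's own proof: fix the stability radius $\vep$ of $g_h$ (Theorem \ref{thm:stabhyp}), flow $g_{\star}$ forward until it lies within $\vep/2$ of $g_h$ using \cite{BW}, carry a $\delta$-neighborhood of $g_{\star}$ along the trajectory via the finite-time continuous dependence of Theorem \ref{thm:ctsdep}, and close with the triangle inequality and local stability, with $\delta = \min\{C_{\mathrm{init}}, \vep/(2C_{\mathrm{fin}})\}$ exactly as the paper does implicitly. Your explicit flagging of the fact that Theorem \ref{thm:ctsdep} is stated in unweighted $\holder^{k,\alpha}$ norms while the argument needs it in $\holder^{k,\alpha}_{\mu}$ is a point the paper passes over silently (it simply invokes Theorem \ref{thm:ctsdep} in the weighted norm), so your added care there is a refinement of, not a departure from, the paper's argument.
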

\begin{proof}
By Theorem \ref{thm:stabhyp} there exists $\vep > 0$ so that any solution to Ricci flow starting in the $\holder^{k,\alpha}_{\mu}$ ball of radius $\vep$ around
$g_h$ exists for all time and converges to $g_h$. 

Given $g_{\star}$ as in the statement of Theorem \ref{thm:AHConvStab}, there exists $T > 0$ so that the Ricci flow $g_{\star}(t)$ starting from $g_{\star}$ exists for all time and 
satisfies $\|g_{\star}(t) - g_h\|_{\holder^{k,\alpha}_{\mu}} < \vep / 2$ for $t > T/2$.  

By Theorem \ref{thm:ctsdep}, there is a $\delta > 0$ so that if $\| g_0 - g_{\star} \|_{\holder^{k,\alpha}_{\mu}} < \delta$, then the Ricci flow solution starting
	from $g_0$ exists for at least time $3 T /4$, and $\| g_0(3T/4) - g_{\star}(3T/4) \|_{\holder^{k,\alpha}_{\mu}} < \vep / 2$.  Thus 
	$\| g_0( 3 T/4) - g_h \|_{\holder^{k,\alpha}_{\mu}} < \vep$.  We conclude that the unique continuation of $g_0(t)$ as a solution exists for all $t > 0$
	and converges exponentially to $g_h$. 
\end{proof}

As promised at the beginning of this section, it should be apparent that the arguments here and in the previous section are quite general, and can be easily
adapted to prove the following
\begin{theorem} \label{thm:gen}
Suppose that $\del_t g = F(g)$ is any ungauged geometric flow which admits a gauging, i.e. the addition of a term $G(g)$ tangent to the
diffeomorphism orbit and such that $D( F + G)$ is an elliptic geometric operator which is admissible in the sense of \cite{BGIM}.  Suppose that
$g_h$ is any stationary point of the ungauged and gauged flows which is strictly stable.   Let $g_0$ be any metric which is the initial point of
a solution $g_0(t)$ to the ungauged flow.  Then for any metric $g_1$ sufficiently near to $g_0$ in an appropriate weighted H\"older norm,
the ungauged flow $g_1(t)$ also converges to $g_h$, and the entire trajectory $g_1(t)$ remains close to $g_0(t)$ for all $t$. 
\end{theorem}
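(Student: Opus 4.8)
The plan is to abstract the proof of Theorem \ref{thm:AHConvStab}, which rests on exactly two ingredients that I would establish in the general setting. First, strict stability of $g_h$ provides a radius $\vep > 0$ such that any solution of the flow starting in the weighted little H\"older ball $\{ \| g - g_h \|_{\holder^{k,\alpha}_\mu} < \vep \}$ exists for all time and converges to $g_h$. Second, the finite-time continuous dependence of Theorem \ref{thm:ctsdep}, transported to the weighted category adapted to $F$, yields for each finite $T$ a $\delta > 0$ so that $\|g_1 - g_0\|_{\holder^{k,\alpha}_\mu} < \delta$ forces $g_1(t)$ to exist on $[0, 3T/4]$ with $\|g_1(3T/4) - g_0(3T/4)\|_{\holder^{k,\alpha}_\mu} < \vep/2$. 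Granting these, the argument is the same catch-up reasoning as before: since the reference trajectory $g_0(t)$ converges to $g_h$, choose $T$ with $\|g_0(t) - g_h\|_{\holder^{k,\alpha}_\mu} < \vep/2$ for $t \ge T/2$; the triangle inequality then gives $\|g_1(3T/4) - g_h\|_{\holder^{k,\alpha}_\mu} < \vep$, so $g_1(3T/4)$ lies in the stability neighborhood and its forward flow converges to $g_h$. Closeness of the entire trajectory follows by splitting $[0,\infty)$ at $3T/4$: on the compact part it is continuous dependence, and on the tail both $g_0(t)$ and $g_1(t)$ lie near $g_h$.

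To establish the stability neighborhood I would follow the sketch leading to Corollary \ref{cor:LunStab}. By hypothesis $D(F+G)$ is an admissible elliptic geometric operator in the sense of \cite{BGIM}, hence sectorial on $\holder^{k,\alpha}_\mu$ at $g_h$; strict stability means its spectrum lies in a half-plane $\{\Re \lambda \ge \omega\}$ for some $\omega > 0$, equivalently that the resolvent set contains a left half-plane as in Proposition \ref{prop:bdresolvent}. The analytic-semigroup machinery of \cite{Lunardi} then supplies local existence, all-time existence for small data, and exponential convergence of the gauged flow to $g_h$, exactly as in \eqref{expest}. To pass from the gauged to the ungauged flow I would argue as in the discussion preceding Theorem \ref{thm:AHConvStab}: since $g_h$ is stationary for both flows, the gauge term $G$ vanishes there, so the exponential decay of the gauged solution forces the associated gauge vector field to decay exponentially in $t$ (and spatially); integrating it produces connecting diffeomorphisms converging to a limit $\phi_\infty$, whence the ungauged solution converges to $\phi_\infty^* g_h$, a metric isometric to $g_h$.

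The substantive work, and the main obstacle, is the weighted analogue of Theorem \ref{thm:ctsdep} for $F$. I would repeat the structure of Section 2: first the contingent statement Proposition \ref{prop:ctsdep-rfrdf}, whose only flow-specific inputs are sectoriality of the linearized gauged operator, which admissibility supplies on $\holder^{k,\alpha}_\mu$; a difference equation for two nearby gauged solutions of the schematic form \eqref{diffmetrics}, whose coefficients involve at most one derivative of the solutions; and the parabolic regularity and gauge-vector-field estimates that upgrade the contingent estimate to one with no loss of regularity. The chaining argument of Section \ref{sec:ctsdep-rest} then removes the contingency, provided one has a short-time existence result for the coupled gauge flow with a time of existence controlled purely by the geometry, the role played for Ricci flow by \cite{BamlerKleiner}, together with a priori higher-derivative bounds along the reference trajectory, the role played by \cite{Shi}. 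The honest obstacle is therefore not the soft semigroup theory but the verification that these last two analytic facts possess weighted analogues for the given flow $F$: for Ricci flow they are precisely Shi's estimates and the Bamler--Kleiner harmonic-map-heat-flow existence theorem, while for a general admissible flow one must supply the corresponding uniform short-time existence and smoothing estimates. Once these are in hand the weighted continuous-dependence statement follows, and the catch-up argument of the first paragraph completes the proof.
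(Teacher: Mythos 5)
Your proposal is correct and follows essentially the same route the paper intends: the paper offers no separate proof of Theorem \ref{thm:gen}, asserting only that ``the arguments here and in the previous section'' adapt, and your three-step scheme (semigroup/sectoriality stability neighborhood as in Corollary \ref{cor:LunStab}, a generalized finite-time continuous dependence as in Theorem \ref{thm:ctsdep}, and the catch-up argument of Theorem \ref{thm:AHConvStab}) is precisely that adaptation. You in fact go further than the paper by explicitly flagging the flow-specific ingredients --- analogues of Shi's estimates and the Bamler--Kleiner short-time existence result --- whose verification the paper leaves implicit.
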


\end{document}